\newcommand{\RR}{\mathbb R}
\newcommand{\sech}{\mathop{\mathrm{sech}}\nolimits}
\newcommand{\ee}{\mathrm{e}}
\newcommand{\manifold}[1]{\mathcal{#1}}
\newcommand{\M}{\manifold{M}}
\newcommand{\D}{\manifold{D}}
\newcommand{\vect}[1]{\mathrm{#1}} 
\newcommand{\x}{\vect{x}}
\newcommand{\va}{\vect{a}}
\newcommand{\vb}{\vect{b}}
\newcommand{\vX}{\vect{X}}
\newcommand{\vY}{\vect{Y}}
\newcommand{\vl}{l}
\newtheorem{thm}{Theorem}[section]
\theoremstyle{remark}
\newtheorem{remark}{Remark}[section]
\newtheorem{example}{Example}[section]
\theoremstyle{definition}
\newtheorem{dfn}{Definition}[section]
\newcommand{\ds}{\displaystyle}
\begin{document}

\title[Canonical Coordinates and Natural Equation for Lorentz Surfaces in $\mathbb R^3_1$]
{Canonical Coordinates and Natural Equation for Lorentz Surfaces in $\mathbb R^3_1$}

\author{Krasimir Kanchev}
\author{Ognian Kassabov}
\author{Velichka Milousheva}

\address {Department of Mathematics and Informatics, Todor Kableshkov University of Transport,
158 Geo Milev Str., 1574, Sofia, Bulgaria}%
\email{kbkanchev@yahoo.com}%

\address{Institute of Mathematics and Informatics, Bulgarian Academy of Sciences,
Acad. G. Bonchev Str. bl. 8, 1113, Sofia, Bulgaria}
\email{okassabov@math.bas.bg}

\address{Institute of Mathematics and Informatics, Bulgarian Academy of Sciences,
Acad. G. Bonchev Str. bl. 8, 1113, Sofia, Bulgaria}
\email{vmil@math.bas.bg}

\subjclass[2010]{Primary 53B30; Secondary 53A35}%
\keywords{Lorentz surfaces, pseudo\,-Euclidean space, canonical coordinates, natural equations}%

\begin{abstract}
We consider Lorentz surfaces in $\mathbb R^3_1$ satisfying the condition $H^2-K\neq 0$,
where $K$ and $H$ are the Gauss curvature and the mean curvature, respectively, and call them Lorentz surfaces of general type.
For this class of surfaces we introduce special isotropic coordinates, which we call canonical, and show that the coefficient $F$ of the first fundamental form   and the mean curvature $H$, expressed in terms of the canonical coordinates, satisfy a special integro-differential equation which we call a natural equation of the Lorentz surfaces of general type. Using this natural equation we prove a fundamental theorem of Bonnet type for Lorentz surfaces of general type. We consider the special cases of  Lorentz surfaces of constant non-zero mean curvature and  minimal Lorentz surfaces.
Finally, we give examples of Lorentz surfaces illustrating the developed theory.

\end{abstract}

\maketitle

\tableofcontents


\section{Introduction} \label{S:Intro}

The question of describing surfaces with prescribed mean or Gauss curvature in the Euclidean 3-space $\RR^3$ and also in the other Riemannian space forms have been subject of an intensive study. Especially, the geometry of spacelike or timelike surfaces in the Minkowski 3-space $\RR^3_1$ has been  of wide interest.  For example, a Kenmotsu-type representation formula for spacelike surfaces with prescribed mean curvature was obtained by K. Akutagawa  and S. Nishikawa in \cite{Aku-Nish}. In \cite{Gal-Mart-Mil}, G\'alvez et al. 
obtained a representation for spacelike surfaces in $\RR^3_1$  using the Gauss map and the conformal structure given by the second fundamental form. M. A. Magid proved that the Gauss map and the mean curvature of a timelike surface 
satisfy a system of partial differential equations and  found a Weierstrass representation formula for timelike surfaces in $\RR^3_1$ \cite{Mag}. 
Timelike surfaces in $\RR^3_1$ with prescribed Gauss curvature and Gauss map are studied in \cite{Al-Esp-Gal} where  
a Kenmotsu-type representation for such surfaces is given. This representation is used to  classify the complete
timelike surfaces with positive constant Gaussian curvature in terms of
harmonic diffeomorphisms between simply connected Lorentz surfaces and the universal covering of
the de Sitter Space.

On the other hand, it is known that the minimal Lorentz surfaces in $\RR^3_1$, $\RR^4_1$, and $\RR^4_2$ can be parametrized by special isothermal coordinates, called \textit{canonical},  such that the main invariants (the Gauss curvature and the normal curvature) of the surface satisfy a system of partial differential equations called a \textit{system of natural PDEs}. The geometry of the corresponding minimal surface is determined  
by the solution of this  system of natural PDEs. 

In \cite{G-M-2013-1},  canonical coordinates for the class of minimal Lorentz surfaces in the Minkowski space $\RR^4_1$  are introduced and the following 
system of natural PDEs is obtained:
\begin{equation}\label{Nat_Eq_K_kappa_R41-tl}
\begin{array}{lll}
\sqrt[4]{K^2+\varkappa^2\phantom{\big|}}\; \Delta^h \ln \sqrt[4]{K^2+\varkappa^2\phantom{\big|}}  &=& 2K; \\[1.5ex]
\sqrt[4]{K^2+\varkappa^2\phantom{\big|}}\; \Delta^h \arctan \ds\frac{\varkappa}{K} &=& 2\varkappa;
\end{array}  \qquad\quad K^2+\varkappa^2\neq 0,
\end{equation} 
where $K$ is the Gauss curvature,  $\varkappa$ is the curvature of the normal connection (the normal curvature), and $\Delta^h$ is the  hyperbolic Laplace operator in $\RR^2_1$.

Similar results are obtained for minimal Lorentz surfaces in the pseudo-Euclidean space with neutral metric 
 $\RR^4_2$ in \cite{A-M-1} and \cite{Kanchev2020}. 
The corresponding system of PDEs has the following form:
\begin{equation}\label{Nat_Eq_K_kappa_R42-tl}
\begin{array}{lll}
\sqrt[4]{\big|K^2-\varkappa^2\big|}\; \Delta^h\ln \sqrt[4]{\big|K^2-\varkappa^2\big|} &=& 2K\,; \\[1.5ex]
\sqrt[4]{\big|K^2-\varkappa^2\big|}\; \Delta^h\ln \left|\ds\frac{\vphantom{\mu^2}K+\varkappa}{K-\varkappa}\right|&=& 4\varkappa\;;
\end{array}  \qquad\quad K^2-\varkappa^2\neq 0.
\end{equation} 

 The minimal Lorentz surfaces in  $\RR^3_1$ can also be considered as surfaces in  $\RR^4_1$ or $\RR^4_2$, in which cases $\varkappa=0$.
So, systems \eqref{Nat_Eq_K_kappa_R41-tl} and \eqref{Nat_Eq_K_kappa_R42-tl} are reduced to one PDE:
\begin{equation}\label{Nat_Eq_K_R31-tl}
\sqrt{|K|}\: \Delta^h \ln \sqrt{|K|} = 2K; \qquad K \neq 0,
\end{equation}
which is the natural equation of minimal Lorentz surfaces in $\RR^3_1$. Of course, the results in this case can be directly obtained.
In \cite{Ganchev2008-1},  canonical coordinates are introduced for minimal Lorentz surfaces in $\RR^3_1$ and equations equivalent to \eqref{Nat_Eq_K_R31-tl} are derived.

Thus the following natural question arises: \emph{How to generalize the concepts of canonical coordinates and natural equation
for a wider class of Lorentz surfaces in  $\RR^3_1$ than that of the minimal ones?} 
The class of Weingarten Lorentz surfaces in $\RR^3_1$ with different real principal curvatures (which is equivalent to $H^2-K > 0$, 
where $K$ and $H$ are the Gauss curvature and the mean curvature, respectively) is considered in \cite{Ganchev-Mihova-2013-1}.  
Canonical principal coordinates are introduced for this class of surfaces and a natural  non-linear partial differential equation is derived, 
which is equivalent to \eqref{Nat_Eq_K_R31-tl} in the case of a minimal surface. 

In the present paper, we propose an alternative approach. We consider Lorentz surfaces in  $\RR^3_1$ satisfying  $H^2-K\neq 0$ and call them \textsl{surfaces of general type}. We introduce special isotropic coordinates (which we call \textit{canonical}) for these surfaces  and obtain a \emph{natural integro-differential equation}. The natural equation for the class of minimal Lorentz surfaces is given by 
\begin{equation*}
\sqrt{|K|}\big(\ln \sqrt{|K|}\,\big)_{uv} = K; \qquad K \neq 0.
\end{equation*}
It can be reduced to \eqref{Nat_Eq_K_R31-tl} by changing the isotropic coordinates with isothermal ones.  This shows,
that the newly obtained results for an arbitrary Lorentz surface of general type in $\RR^3_1$ generalize the known results for the case of a minimal Lorentz surface.

In Section \ref{S:Prelim}, we give some basic formulas for Lorentz surfaces in  $\RR^3_1$ parametrized by arbitrary isotropic coordinates. We present the
 Gauss and Codazzi equations  in terms of these coordinates and formulate the fundamental theorem of Bonnet type.

In Section \ref{Can_Coord-LorSurf_R31}, we introduce the notion of \textit{canonical isotropic coordinates} for the class of Lorentz surfaces of general type ($H^2-K\neq 0$) in $\RR^3_1$.
We prove existence and uniqueness theorems for these coordinates and give the relation between the canonical coordinates and the natural parameters of the isotropic curves of the surface. 

 In Section \ref{Nat_Eq-LorSurf_R31}, we consider Lorentz surfaces of general type parametrized by canonical coordinates and show that the coefficient $F$ of the first fundamental form and the mean curvature  $H$ of such surface satisfy the following integro-differential equation
\begin{equation*}
\frac{F F_{uv} -  F_u F_v}{F} = \textstyle
\left(\varepsilon_1 + \int_{v_0}^v\, F(u,s)H_u(u,s)\,ds\right) 
\left(\varepsilon_2 + \int_{u_0}^u\, F(s,v)H_v(s,v)\,ds \right) - F^2H^2,
\end{equation*}
$\varepsilon_1 = \pm 1, \varepsilon_2 = \pm 1$.
We call it the \textit{natural equation} of the Lorentz surfaces in  $\RR^3_1$ and prove a fundamental theorem of  Bonnet type.
We consider in detail the special cases of a Lorentz surface with  non-zero constant mean curvature and  a minimal Lorentz surface.

 In Section \ref{S:Examples_Can_Coord-LorSurf_R31}, we give examples of different types of Lorentz surfaces and their canonical coordinates in 
$\RR^3_1$.


\section{Preliminaries} \label{S:Prelim}

Let  $\RR^3_1$  be the standard three-dimensional pseudo-Euclidean space in which the indefinite inner scalar product is given by the formula:
\begin{equation*}
\langle \va ,\vb \rangle = -a_1b_1+a_2b_2+a_3b_3.
\end{equation*}
Let $\M=(\D ,\x)$ be a Lorentz surface in $\RR^3_1$, where $\D\subset\RR^2$ and $\x : \D \to \RR^3_1$ is an immersion.
The coefficients of the first fundamental form of $\M$ are denoted as usually by $E, F, G$  and  $L, M, N$ denote the coefficients of the second fundamental form.
Then, the Gauss curvature $K$ and the mean  curvature   $H$ of $\M$ are given by the formulas (see \cite{Anciaux-1,Lopez2014}):
\begin{equation*}
K = \frac{LN-M^2}{EG-F^2}; \qquad H = \frac{EN-2FM+GL}{2(EG-F^2)}.
\end{equation*}

 In a neighbourhood of each point of  $\M$ there exist isotropic coordinates  $(u,v)$ such that $E=G=0$ \cite{Anciaux-1}. Such parameters  are also called null coordinates  \cite{Inog-1}.  It can easily be seen that if $(u,v)$ and $(\tilde u,\tilde v)$ are two different pairs   of isotropic coordinates in a  neighbourhood of a fixed point, then they are related either by  $u = u(\tilde u)$,  $v = v(\tilde v)$, or  $u = u(\tilde v)$,  $v = v(\tilde u)$.

Further, we consider a surface $\M$ parametrized by isotropic coordinates and without loss of generality we assume that $F>0$. Then, the formulas for $K$ and $H$ take the following form: 
\begin{equation}\label{H_K-Null_R^3_1-tl}
K = \frac{M^2-LN}{F^2}; \qquad H = \frac{M}{F}.
\end{equation}

Consider  the tangent vector fields $\vX = \x_u, \; \vY = \x_v$ and denote by 
 $\vl$ the unit normal vector field $\vl = \displaystyle{\frac{\x_u \times \x_v}{|\x_u\times \x_v|}}$ such that  
$\{\vX,\vY,\vl\}$ be a positively oriented  frame field in  $\RR^3_1$.
Since $\M$ is parametrized by isotropic coordinates, we have:
\begin{equation*}
\vX^2=\vY^2=0; \quad \vl^2=1; \quad \langle \vX ,\vY \rangle = F; \quad \langle \vX ,\vl \rangle = \langle \vY ,\vl \rangle = 0.
\end{equation*}
Hence, we get 
\begin{equation*}
\vX_v=\vY_u; \quad 
\langle \vX_u ,\vX \rangle = \langle \vX_v ,\vX \rangle = \langle \vY_u ,\vY \rangle = \langle \vY_v ,\vY \rangle = 
\langle \vl_u ,\vl \rangle = \langle \vl_v ,\vl \rangle = 0.
\end{equation*}

Using the last equalities we obtain the following Frenet-type formulas for the frame field $\{\vX,\vY,\vl\}$:
\begin{equation}\label{Frene_X_Y_n-R^3_1-tl}
\left|
\begin{array}{llrrr}
        \vX_u &=&  \ds\frac{F_u}{F} \vX  &                                  & + \;\;\,    L \vl; \\[1.7ex]
        \vY_u &=&                        &                                  &   \  \;\;\, M \vl; \\[1.7ex]
				\vl_u  &=& -\ds\frac{M}{F}   \vX  & -\ \ \ \ds\frac{L}{F}   \vY;     &                
\end{array}\right. \quad
\left|
\begin{array}{llrrr}
        \vX_v &=&                        &                                  &     \;\;\,   M \vl; \\[0.7ex]
        \vY_v &=&                        &  \ds\frac{F_v}{F}       \vY\;    & +\  \;\;\,   N \vl; \\[1.7ex]
				\vl_v  &=& -\ds\frac{N}{F}   \vX  & -\ \ \ \ds\frac{M}{F}   \vY.     &                 
\end{array}\right. 
\end{equation}

\smallskip
The integrability conditions of \eqref{Frene_X_Y_n-R^3_1-tl}, considered as a system of PDE for the triple $(\vX,\vY,\vl)$, imply the following Gauss equation
 \begin{equation}\label{Gauss_Null_2-R^3_1-tl} 
\frac{F F_{uv} -  F_u F_v}{F} = L N - M^2
\end{equation}
and the Codazzi equations
\begin{equation}\label{Codazzi_12_Null_2-R^3_1-tl} 
L_v = F \left(\ds\frac{M}{F}\right)_u; \qquad N_u = F \left(\ds\frac{M}{F}\right)_v.
\end{equation}
Note that \eqref{Gauss_Null_2-R^3_1-tl} and \eqref{H_K-Null_R^3_1-tl} imply 
$K=-\ds\frac{1}{F}(\ln F)_{uv}$, which is the Gauss's  \textit{Theorema Egregium} in the case of isotropic coordinates.
\smallskip

As it is well known, the Gauss and Codazzi  equations are not only necessary, but  also sufficient conditions for the 
existence of a solution to the PDE system  \eqref{Frene_X_Y_n-R^3_1-tl}. This gives us a fundamental Bonnet-type theorem for Lorentz surfaces in $\RR^3_1$.
The proof is analogous to that of the classical theorem for surface in $\RR^3$ (see (\cite{doCarmo}).

\begin{thm}\label{Thm-Bonnet_FLMN_R31-tl}
Let $\M$ be a Lorentz surface in $\RR^3_1$ parametrized by isotropic coordinates.
Then, the coefficients $F$, $L$, $M$, $N$ of the first and the second fundamental form of $\M$ give a solution to the Gauss and Codazzi  equations \eqref{Gauss_Null_2-R^3_1-tl} and \eqref{Codazzi_12_Null_2-R^3_1-tl}.
If $\hat\M$ is obtained from $\M$ by a  proper motion in $\RR^3_1$, then $\hat\M$ generates the same solution to  \eqref{Gauss_Null_2-R^3_1-tl} and \eqref{Codazzi_12_Null_2-R^3_1-tl}. 

 Conversely, if the functions  $F$, $L$, $M$, $N$ satisfy equations \eqref{Gauss_Null_2-R^3_1-tl} and \eqref{Codazzi_12_Null_2-R^3_1-tl}, then, at least locally, there exists  a unique (up to a proper motion in $\RR^3_1$) Lorentz surface $\M$ parametrized by isotropic coordinates, such that the given functions are the coefficients of the first and the second fundamental form of $\M$.
\end{thm}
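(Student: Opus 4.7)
The plan is to follow the classical Bonnet template: view the Frenet-type system \eqref{Frene_X_Y_n-R^3_1-tl} as a first-order linear PDE system for the vector-valued frame $(\vX,\vY,\vl)$, whose cross-derivative compatibility conditions are exactly \eqref{Gauss_Null_2-R^3_1-tl}--\eqref{Codazzi_12_Null_2-R^3_1-tl}.

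The necessity direction is essentially already established above: the derivation of \eqref{Gauss_Null_2-R^3_1-tl}--\eqref{Codazzi_12_Null_2-R^3_1-tl} amounts to saying that any genuine Lorentz immersion yields a solution of these equations. For the invariance under proper motions, one notes that a transformation $\x\mapsto A\x+\vb$, with $A$ in the identity component of $O(1,2)$ and $\vb\in\RR^3_1$, preserves every inner product entering the definition of $F,L,M,N$, preserves the isotropy of $(u,v)$, and preserves the positive orientation; consequently the unit normal and all four coefficients are unchanged.

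For sufficiency and uniqueness, given $F>0,L,M,N$ satisfying \eqref{Gauss_Null_2-R^3_1-tl}--\eqref{Codazzi_12_Null_2-R^3_1-tl} on a simply connected neighbourhood of a chosen point $(u_0,v_0)\in\D$, the strategy is to choose an initial triple $(\vX_0,\vY_0,\vl_0)\in(\RR^3_1)^3$ satisfying $\vX_0^2=\vY_0^2=0$, $\vl_0^2=1$, $\langle\vX_0,\vY_0\rangle=F(u_0,v_0)$, $\langle\vX_0,\vl_0\rangle=\langle\vY_0,\vl_0\rangle=0$, and positively oriented. Such triples form a single orbit of the identity component of the Lorentz group, so any two are related by a unique proper motion. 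The Frobenius theorem applied to \eqref{Frene_X_Y_n-R^3_1-tl}, whose integrability is secured by hypothesis, then produces a unique smooth triple $(\vX,\vY,\vl)$ with these initial values. The relation $\vX_v=\vY_u$ built into \eqref{Frene_X_Y_n-R^3_1-tl} expresses closedness of the $\RR^3_1$-valued $1$-form $\vX\,du+\vY\,dv$; on the simply connected neighbourhood it is exact, yielding $\x$ with $\x_u=\vX$, $\x_v=\vY$, unique up to a translation.

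The most delicate step is showing that the algebraic relations above persist at every $(u,v)$, not merely at $(u_0,v_0)$. The plan is to introduce the six scalar invariants $\varphi_1=\vX^2$, $\varphi_2=\vY^2$, $\varphi_3=\vl^2-1$, $\varphi_4=\langle\vX,\vY\rangle-F$, $\varphi_5=\langle\vX,\vl\rangle$, $\varphi_6=\langle\vY,\vl\rangle$; differentiating each in $u$ and in $v$ and substituting \eqref{Frene_X_Y_n-R^3_1-tl} produces a closed linear homogeneous first-order PDE system in $(\varphi_1,\ldots,\varphi_6)$ admitting the zero solution and matching the zero initial values, so uniqueness forces $\varphi_i\equiv0$. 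This yields $E=G=0$ and $\langle\x_u,\x_v\rangle=F$, while reading off the normal components of \eqref{Frene_X_Y_n-R^3_1-tl} recovers $L,M,N$ as the coefficients of the second fundamental form of $\x$. Uniqueness up to proper motion then follows by aligning the initial frames of any two solutions by a proper motion plus translation and invoking uniqueness of the Frobenius solution. The main obstacle I foresee is precisely this preservation-of-metric-data argument: because the ambient metric is indefinite, one cannot invoke orthonormality, so all six quadratic invariants must be tracked as one coupled linear homogeneous system and shown simultaneously to vanish.
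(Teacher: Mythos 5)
Your proposal follows precisely the classical Bonnet template (Frobenius integration of the moving-frame system plus a linear homogeneous ODE argument showing the quadratic invariants of the frame are preserved), which is exactly what the paper intends: the paper gives no proof of this theorem, merely remarking that it is analogous to the classical Bonnet theorem in $\RR^3$ and citing do Carmo. The one small imprecision worth noting is the claim that positively oriented triples with the prescribed Gram matrix form a single orbit of the \emph{identity component} of the Lorentz group: in fact they form a single orbit of $SO(1,2)$ but \emph{two} orbits of $SO^{+}(1,2)$ (the time-reversing element $\mathrm{diag}(-1,-1,1)$ lies in $SO(1,2)\setminus SO^{+}(1,2)$ yet preserves the Gram matrix and orientation). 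This does not affect the conclusion, since in this paper ``proper motion'' evidently means an orientation-preserving isometry, i.e.\ $SO(1,2)\ltimes\RR^3$, so the alignment map $A$ (which necessarily has $\det A=1$ because both frames are positively oriented) is indeed a proper motion; but the phrase ``identity component'' should be dropped.
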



\section{Canonical isotropic coordinates of  Lorentz surfaces in $\mathbb R^3_1$}\label{Can_Coord-LorSurf_R31}

 In the present section, we will show that the Lorentz surfaces satisfying  $H^2-K\neq 0$  admit special isotropic coordinates which we will call canonical. 
It follows from the Codazzi  equations \eqref{Codazzi_12_Null_2-R^3_1-tl} and the second equality of \eqref{H_K-Null_R^3_1-tl} that 
\begin{equation}\label{new-eq}
L_v = F H_u; \quad N_u = F H_v.
\end{equation}
Integrating the last equalities, we obtain
\begin{equation}\label{LN-FH-R^3_1-tl} 
L = L(u,v_0) + \int_{v_0}^v\, F(u,s)H_u(u,s)\,ds;
	\quad
N = N(u_0,v) + \int_{u_0}^u\, F(s,v)H_v(s,v)\,ds.
\end{equation}

Now, we will try to choose the isotropic coordinates in such a way that $L(u,v_0)$ and $N(u_0,v)$ 
to have the simplest form.

First, let's find the transformation formulas for the coefficients of the first and the second fundamental  form under changes of the isotropic coordinates.
 Consider the following change  of the isotropic coordinates (it preserves the numeration): 
\begin{equation*}\label{u_tild_u-R^3_1-tl}
u = u(\tilde u); \qquad v = v(\tilde v).
\end{equation*}
Then, we have $\tilde F = Fu'v'$, which implies $u'v'>0$, since we have assumed at the beginning that $\tilde F > 0$ and $F > 0$.
In this case, the orientation of the surface does not change, i.e. $\tilde \vl = \vl$.
Then, for the coefficients $\tilde F$, $\tilde L$, $\tilde M$,  $\tilde N$ we have
\begin{equation}\label{FLMN_tild_1-R^3_1-tl}
\tilde F = Fu'v'; \qquad \tilde L = L{u'}^2; \qquad \tilde M = Mu'v'; \qquad \tilde N = N{v'}^2.
\end{equation}

In the case of changing the numeration,  it is sufficient  to consider only the change of coordinates:
\begin{equation*}\label{u_tild_v-R^3_1-tl}
u = \tilde v; \qquad v = \tilde u,
\end{equation*}
since the general case is reduced to this and the previous one. In this case, the orientation of the surface changes, i.e. $\tilde \vl = -\vl$ and we have 
\begin{equation}\label{FLMN_tild_2-R^3_1-tl}
\tilde F = F; \qquad \tilde L = -N; \qquad \tilde M = -M; \qquad \tilde N = -L.
\end{equation}

 The transformation formulas \eqref{FLMN_tild_1-R^3_1-tl} and \eqref{FLMN_tild_2-R^3_1-tl} show that, 
if $L=0$ or $N=0$ for some isotropic coordinates, then $\tilde L=0$ or $\tilde N=0$ for any isotropic coordinates.
Further, we consider surfaces satisfying  $L\neq 0$ and  $N\neq 0$ at least locally. 
It follows from  \eqref{H_K-Null_R^3_1-tl} that
\begin{equation}\label{H2K-Null_R^3_1-tl}
H^2-K = \frac{LN}{F^2},
\end{equation}
which implies that the conditions  $L\neq 0$ and $N\neq 0$ are equivalent to  $H^2-K\neq 0$.

We give the following definition.

\begin{dfn}\label{Def-Gen_Typ_R^3_1-tl}
A Lorentz surface $\M$ in $\RR^3_1$ is said to be of \textit{general type}, if  $H^2-K\neq 0$.
\end{dfn}

The Lorentz surfaces of general type are naturally divided into two subclasses.

\begin{dfn}\label{Def-Kind12_R^3_1-tl}
A Lorentz surface of general type in $\RR^3_1$ is said to be of \textit{first kind} (resp. \textit{second kind}), if  $H^2-K > 0$ (resp  $H^2-K < 0$).
\end{dfn}

\begin{remark}\label{Rem-Gen_Typ_Kind12_R^3_1-tl}
Since  $K$ and $H$ are invariants of $\M$, the property of a surface to be of general type, as well as the kind of the surface, are geometric -- they do not depend on the local parametrization and are invariant under motions in  $\RR^3_1$.
It is known that the discriminant of the characteristic polynomial of the Weingarten map   has the form $D=4(H^2-K)$ \cite{Lopez2014}.
Hence, the surfaces of general type are those surfaces for which the Weingarten map has two different eigenvalues. Moreover, the surfaces of first kind are those with real eigenvalues, the surfaces of second kind are those with complex eigenvalues.
\end{remark}

Now, we will  introduce special isotropic coordinates by the following

\begin{dfn}\label{Def_Can_R31-tl}
Let $\M=(\D ,\x)$ be a Lorentz surface of general type in  $\RR^3_1$ parametrized by isotropic coordinates $(u,v)$, such that $F>0$, and $\x_0=\x(u_0,v_0)$ be a point of $\M$.
We call $(u,v)$  \textit{canonical coordinates with initial point  $\x_0$}, if the coefficients of the second fundamental form satisfy the conditions
\begin{equation}\label{Can_LN_R42-tl}
L(u,v_0) = \varepsilon_1; \qquad N(u_0,v) = \varepsilon_2,
\end{equation}
where $\varepsilon_1=\pm 1$ and $\varepsilon_2=\pm 1$.
\end{dfn}

\begin{remark}\label{Rem_Can_R31-tl}
In the general case,  condition \eqref{Can_LN_R42-tl} for  the coordinates to be canonical depends on the choice of the initial point 
$(u_0,v_0)$.
If $(u_1,v_1)$ is another point in the same neighbourhood, from  \eqref{LN-FH-R^3_1-tl} it is obvious that  
$L(u,v_0)\neq L(u,v_1)$ and $N(u_0,v)\neq N(u_1,v)$, in general. In the special case of a surface with constant mean curvature 
$H$, $L$ is a function of  $u$ and $N$ is a function of $v$, because of \eqref{new-eq}. Hence, for the class of surfaces with constant mean curvature the canonicity of the coordinates does not depend on the choice of the initial point $(u_0,v_0)$. 
\end{remark}

\begin{thm}\label{Can_Coord-exist_R42-tl}
If $\M=(\D ,\x)$ is a Lorentz surface of general type in  $\RR^3_1$ and $\x_0$ is a fixed point, then we can introduce canonical coordinates with initial point $\x_0$.
\end{thm}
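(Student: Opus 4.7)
The plan is to start from an arbitrary isotropic parametrization near $\x_0$ and to produce canonical coordinates by an admissible change of variables of the type $u=u(\tilde u)$, $v=v(\tilde v)$ considered in \eqref{FLMN_tild_1-R^3_1-tl}. First I would fix any isotropic coordinates $(u,v)$ with $F>0$ on a neighbourhood of $\x_0$, so that $\x_0=\x(u_0,v_0)$. Since $\M$ is of general type, \eqref{H2K-Null_R^3_1-tl} gives $L(u_0,v_0)\,N(u_0,v_0)\neq 0$, and by continuity both $L(u,v_0)$ and $N(u_0,v)$ keep a constant sign on a (possibly smaller) neighbourhood of the initial point.

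Next, I would translate the canonicity conditions \eqref{Can_LN_R42-tl} into ordinary differential equations for the two coordinate-change functions. Using \eqref{FLMN_tild_1-R^3_1-tl} with $u'=du/d\tilde u$, $v'=dv/d\tilde v$, and evaluating at $\tilde v=\tilde v_0$ (where $v(\tilde v_0)=v_0$) and at $\tilde u=\tilde u_0$ (where $u(\tilde u_0)=u_0$), the conditions $\tilde L(\tilde u,\tilde v_0)=\varepsilon_1$ and $\tilde N(\tilde u_0,\tilde v)=\varepsilon_2$ become
\begin{equation*}
L\bigl(u(\tilde u),v_0\bigr)\,\bigl(u'(\tilde u)\bigr)^2=\varepsilon_1, \qquad N\bigl(u_0,v(\tilde v)\bigr)\,\bigl(v'(\tilde v)\bigr)^2=\varepsilon_2.
\end{equation*}
Choosing $\varepsilon_1=\operatorname{sgn} L(\cdot,v_0)$ and $\varepsilon_2=\operatorname{sgn} N(u_0,\cdot)$, and taking positive square roots so that $u',v'>0$ (which also preserves $\tilde F=Fu'v'>0$), this reduces to a pair of decoupled first-order autonomous ODEs
\begin{equation*}
u'(\tilde u)=\frac{1}{\sqrt{\bigl|L(u(\tilde u),v_0)\bigr|}},\qquad v'(\tilde v)=\frac{1}{\sqrt{\bigl|N(u_0,v(\tilde v))\bigr|}},
\end{equation*}
with initial data $u(\tilde u_0)=u_0$ and $v(\tilde v_0)=v_0$.

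Finally, I would invoke the classical Picard--Lindel\"of theorem to obtain smooth local solutions $u(\tilde u)$ and $v(\tilde v)$ on small intervals around the initial values; the right-hand sides are smooth because $L$ and $N$ stay away from zero, so no technical difficulty arises. Since $u'>0$ and $v'>0$, the two maps are local diffeomorphisms, hence $(\tilde u,\tilde v)$ define valid isotropic coordinates on a neighbourhood of $\x_0$, and by construction they satisfy \eqref{Can_LN_R42-tl}. The proof contains essentially no hard step: the substance of the theorem is that the canonicity conditions decouple into two independent first-order ODEs, and that the hypothesis $H^2-K\neq 0$ is exactly what makes these ODEs non-degenerate. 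The only point that needs a little care is the matching of the signs $\varepsilon_1,\varepsilon_2$ with those of $L$ and $N$, which is forced, and is the reason why Definition \ref{Def_Can_R31-tl} had to allow both values $\pm 1$ for these signs.
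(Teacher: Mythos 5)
Your proof is correct and follows essentially the same route as the paper: reduce the canonicity condition via the transformation law \eqref{FLMN_tild_1-R^3_1-tl} to decoupled ODEs for the two coordinate-change functions, using $H^2-K\neq 0$ to ensure non-degeneracy, and note that the signs $\varepsilon_1,\varepsilon_2$ are forced by those of $L,N$. The only cosmetic difference is that the paper observes the ODEs are separable and simply writes the inverse change $\tilde u(u)=\tilde u_0+\int_{u_0}^u\sqrt{|L(s,v_0)|}\,ds$ (and similarly for $\tilde v$) as an explicit integral of a known positive function, so no appeal to Picard--Lindel\"of is needed.
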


\begin{proof}
Let $\x_0$ be a fixed point of $\M$ and $(u,v)$ be isotropic coordinates in a  neighbourhood of $\x_0$ such that  $F>0$ and $\x_0=\x(u_0,v_0)$. 
Consider the change of the coordinates  $u = u(\tilde u)$ and $v = v(\tilde v)$. According to \eqref{FLMN_tild_1-R^3_1-tl} and Definition \ref{Def_Can_R31-tl}, the new coordinates $(\tilde u, \tilde v)$ 
are canonical if and only if
\begin{equation}\label{cond_can_R31-tl}
L(u,v_0){u'}^2 = \tilde L(\tilde u, \tilde v_0) = \pm 1; \qquad N(u_0,v){v'}^2 = \tilde N(\tilde u_0, \tilde v) = \pm 1.
\end{equation}
The signs in the right-hand sides of \eqref{cond_can_R31-tl} are chosen to coincide with the signs of $L$ and $N$, respectively.
Thus we obtain ordinary differential equations  for $u(\tilde u)$ and $v(\tilde v)$ whose solutions have the following form:
\begin{equation}\label{eq_can-sol_R31-tl}
\tilde u = \tilde u_0 + \int_{u_0}^u \sqrt{|L(s,v_0)|}\,ds; \qquad \tilde v = \tilde v_0 + \int_{v_0}^v \sqrt{|N(u_0,s)|}\,ds,
\end{equation}
where $(\tilde u_0,\tilde v_0)$ is arbitrary chosen.
It follows from  \eqref{H2K-Null_R^3_1-tl} that $L\neq 0$ and $N\neq 0$, which imply $\tilde u'>0$ and $\tilde v'>0$.
Hence,  equalities  \eqref{eq_can-sol_R31-tl} define new isotropic coordinates  $(\tilde u,\tilde v)$ satisfying the condition
$\tilde F>0$. Since \eqref{eq_can-sol_R31-tl} is equivalent to \eqref{cond_can_R31-tl}, then 
 $\tilde L(\tilde u, \tilde v_0) = \pm 1$ and $\tilde N(\tilde u_0, \tilde v) = \pm 1$.
So, $(\tilde u,\tilde v)$ are canonical coordinates of $\M$ with initial point $\x_0$.   
\end{proof}

\begin{remark}\label{Rem-Can_Coord-exist_R42-tl}
If $\M$ is a surface of first kind according to Definition \ref{Def-Kind12_R^3_1-tl}, then \eqref{H2K-Null_R^3_1-tl} implies $LN>0$.
It follows from  \eqref{FLMN_tild_2-R^3_1-tl} that a change in the coordinates numeration leads to a change in the signs of  
 $L$ and $N$. Hence, surfaces of first kind admit both canonical coordinates for which $L=N=1$ and canonical coordinates for which
$L=N=-1$.

If $\M$ is of second kind, then  $LN<0$, and hence,  \eqref{FLMN_tild_1-R^3_1-tl} and \eqref{FLMN_tild_2-R^3_1-tl} show that the signs of 
 $L$ and $N$ do not change under changes of the isotropic coordinates. Therefore, the surfaces of second kind can be divided into two subclasses: surfaces with canonical coordinates  such that $L=1$ and $N=-1$, and surfaces with  canonical coordinates such that  
$L=-1$ and $N=1$. 
\end{remark}

Now, we will discuss the question of  uniqueness of the canonical coordinates.

\begin{thm}\label{Can_Coord-uniq_R31-tl}
Let $\M$ be a Lorentz surface of general type in $\RR^3_1$ and $(u,v)$ be canonical coordinates with initial point $x_0 = x(u_0,v_0)$ of $\M$.
Then $(\tilde u, \tilde v)$ is another pair of canonical coordinates with the same initial point $x_0$ if and only if
\begin{equation}\label{uniq_can_coord_R31-tl}
\begin{array}{llr} u \!\!\!&=&\!\!\! \delta\tilde u + c_1;\\ v \!\!\!&=&\!\!\! \delta\tilde v + c_2; \end{array} 
\quad \text{or} \quad 
\begin{array}{llr} u \!\!\!&=&\!\!\! \delta\tilde v + c_1;\\ v \!\!\!&=&\!\!\! \delta\tilde u + c_2, \end{array}
\end{equation}
where $\delta=\pm 1$, $c_1$ and $c_2$ are constants. 
\end{thm}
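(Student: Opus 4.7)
The plan is to use the basic dichotomy stated earlier in the paper: any two isotropic coordinate systems on a neighbourhood are related either by $u=u(\tilde u)$, $v=v(\tilde v)$ (the numeration\,-preserving case) or by $u=u(\tilde v)$, $v=v(\tilde u)$ (the numeration\,-swapping case), and to analyse each case via the transformation formulas \eqref{FLMN_tild_1-R^3_1-tl} and \eqref{FLMN_tild_2-R^3_1-tl}. The \emph{if} direction is a direct verification: for $u=\delta\tilde u+c_1$, $v=\delta\tilde v+c_2$ one has $u'=v'=\delta$, and \eqref{FLMN_tild_1-R^3_1-tl} gives $\tilde L(\tilde u,\tilde v_0) = L(u(\tilde u),v_0)\,\delta^2 = \varepsilon_1$ and $\tilde N(\tilde u_0,\tilde v) = \varepsilon_2$; the swap case is handled similarly by composing with \eqref{FLMN_tild_2-R^3_1-tl}.

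For the converse in the numeration\,-preserving case, I would apply \eqref{FLMN_tild_1-R^3_1-tl} together with the assumed canonicity $L(u,v_0)=\varepsilon_1$ to obtain
\[
\pm 1 \;=\; \tilde L(\tilde u,\tilde v_0) \;=\; L(u(\tilde u),v_0)\,(u'(\tilde u))^2 \;=\; \varepsilon_1\,(u'(\tilde u))^2,
\]
which forces $(u'(\tilde u))^2\equiv 1$, hence $u'(\tilde u)=\delta_1=\pm 1$ is a nonzero constant. The symmetric calculation with $\tilde N(\tilde u_0,\tilde v)$ and $N(u_0,v)=\varepsilon_2$ yields $v'(\tilde v)=\delta_2=\pm 1$. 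The positivity $\tilde F=F\,u'v'>0$ enforces $\delta_1\delta_2>0$, so $\delta_1=\delta_2=:\delta$. Integration gives $u=\delta\tilde u+c_1$, $v=\delta\tilde v+c_2$, and the shared initial\,-point condition $\x(u_0,v_0)=\x(\tilde u_0,\tilde v_0)$ fixes $c_1,c_2$.

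For the numeration\,-swapping case, the main technical obstacle I expect is tracking how the canonicity of $(\tilde u,\tilde v)$ on the curves $\{\tilde v=\tilde v_0\}$ and $\{\tilde u=\tilde u_0\}$ transports back to the canonicity of $(u,v)$ on $\{v=v_0\}$ and $\{u=u_0\}$, since now the roles of $L$ and $N$ are interchanged. I would either compose the pure swap \eqref{FLMN_tild_2-R^3_1-tl} with a numeration\,-preserving change, or argue directly from $\x_{\tilde u}=v'(\tilde u)\,\vY$, $\x_{\tilde v}=u'(\tilde v)\,\vX$ and the induced orientation reversal $\tilde\vl=-\vl$, to obtain
\[
\tilde L(\tilde u,\tilde v_0) \;=\; -(v'(\tilde u))^2\,N(u(\tilde v_0),v(\tilde u)) \;=\; -\varepsilon_2\,(v'(\tilde u))^2,
\]
where I have used $u(\tilde v_0)=u_0$ from the common initial point and $N(u_0,\cdot)=\varepsilon_2$. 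As before, this together with the analogous identity for $\tilde N(\tilde u_0,\tilde v)$ forces $v'(\tilde u)$ and $u'(\tilde v)$ to be constants $\pm 1$; the condition $\tilde F=F\,u'v'>0$ again pins the common sign $\delta$, giving $u=\delta\tilde v+c_1$, $v=\delta\tilde u+c_2$, which completes the classification \eqref{uniq_can_coord_R31-tl}.
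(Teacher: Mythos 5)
Your proof is correct and follows essentially the same route as the paper: it uses the numeration dichotomy for isotropic coordinate changes and the transformation formulas \eqref{FLMN_tild_1-R^3_1-tl}, \eqref{FLMN_tild_2-R^3_1-tl} together with the shared initial point to force ${u'}^2={v'}^2=1$ and $u'v'>0$. The only minor difference is that the paper disposes of the numeration-swapping case by factoring through the pure transposition \eqref{FLMN_tild_2-R^3_1-tl} and reducing to the preserving case, whereas you compute $\tilde L(\tilde u,\tilde v_0)=-\varepsilon_2\,(v'(\tilde u))^2$ and its analogue directly; both routes are sound.
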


\begin{proof}
First, we consider the case   $u = u(\tilde u)$ and $v = v(\tilde v)$. 
Equalities \eqref{FLMN_tild_1-R^3_1-tl} imply that  $(\tilde u,\tilde v)$ are canonical coordinates if and only if ${u'}^2=1$, ${v'}^2=1$, and $u'v'>0$. 
The last conditions are equivalent to the first pair of equalities in \eqref{uniq_can_coord_R31-tl}

 The case  $u = u(\tilde v)$ and $v = v(\tilde u)$ reduces to the previous one by means of  \eqref{FLMN_tild_2-R^3_1-tl}.
\end{proof}

The meaning of the last theorem is that the canonical coordinates are uniquely determined up to a numeration, a sign and an additive constant.

\medskip

 At the end of this section we will characterize the canonical coordinates in terms of the null curves lying on the considered surfaces. 
Recall that, if  $\alpha$ is a null curve  (${\alpha'}^2=0$) in $\RR^3_1$, then ${\alpha''}^2\ge 0$.
The null curves satisfying ${\alpha''}^2>0$ are called  \emph{non-degenerate}. It is known that these curves admit a parametrization such that 
${\alpha''}^2=1$ \cite{Lopez2014}. Such parameter is known in the literature as a \textit{natural parameter} or 
\textit{pseudo arc-length parameter}, since it plays a role similar to the role
of the arc-length parameter for non-null curves \cite{Duggal-Jin}.

\begin{thm}\label{Can_Coord-Nat-Param_R31-tl}
Let $\M=(\D ,\x)$ be a Lorentz surface  in  $\RR^3_1$ parametrized by isotropic coordinates $(u,v)$, such that $F>0$, and $\x_0=\x(u_0,v_0)$ be a point of $\M$.
Then, $\M$ is of general type if and only if the null curves lying on $\M$  are non-degenerate.  
The coordinates  $(u,v)$ of $\M$ are canonical with initial point $\x_0$ if and only if they are  natural parameters of the null curves on  $\M$ passing through $\x_0$.
\end{thm}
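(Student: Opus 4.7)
The plan is to translate the properties of general type and of canonical coordinates into pointwise conditions on the coefficients $L$ and $N$, and then match these with the geometric conditions on the two null parametric curves through the chosen point.

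First I would observe that, since $\M$ is parametrized by isotropic coordinates, the only null curves on $\M$ are (reparametrizations of) the parametric curves $v=\text{const}$ and $u=\text{const}$. Through the point $\x_0=\x(u_0,v_0)$ there pass exactly two such curves, namely $\alpha(u)=\x(u,v_0)$ and $\beta(v)=\x(u_0,v)$, with $\alpha'=\vX$ and $\beta'=\vY$. Using the Frenet-type formulas \eqref{Frene_X_Y_n-R^3_1-tl} I would compute
\begin{equation*}
\alpha''(u)=\vX_u=\frac{F_u}{F}\vX+L\,\vl,\qquad \beta''(v)=\vY_v=\frac{F_v}{F}\vY+N\,\vl,
\end{equation*}
and then invoke the identities $\vX^2=0$, $\vY^2=0$, $\langle\vX,\vl\rangle=\langle\vY,\vl\rangle=0$, $\vl^2=1$ to conclude
\begin{equation*}
{\alpha''(u)}^2=L(u,v_0)^2,\qquad {\beta''(v)}^2=N(u_0,v)^2.
\end{equation*}
The same computation carried out along a generic coordinate curve shows that an arbitrary null curve $u\mapsto\x(u,c)$ on $\M$ satisfies ${\alpha''}^2=L(u,c)^2$, and analogously for $u=\text{const}$.

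For the first claim of the theorem I would then argue as follows. The null curves on $\M$ are non-degenerate if and only if $L$ and $N$ are nowhere zero on $\D$. By the identity \eqref{H2K-Null_R^3_1-tl}, $H^2-K=LN/F^2$, this is in turn equivalent to $H^2-K\neq 0$, i.e.\ to $\M$ being of general type (Definition \ref{Def-Gen_Typ_R^3_1-tl}).

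For the second claim, $u$ is a natural parameter of $\alpha$ exactly when ${\alpha''(u)}^2=1$, which by the calculation above means $L(u,v_0)=\pm 1$; likewise $v$ is a natural parameter of $\beta$ exactly when $N(u_0,v)=\pm 1$. By Definition \ref{Def_Can_R31-tl} these two conditions together are precisely the defining conditions \eqref{Can_LN_R42-tl} of canonical coordinates with initial point $\x_0$, and the equivalence follows. I do not expect any serious obstacle here: the whole argument is a direct application of the Frenet-type formulas, and the main subtlety is simply to note that the value ${\alpha''}^2$ is constant equal to $1$ (not merely non-zero) along $\alpha$ when $L(\cdot,v_0)=\pm 1$, which is automatic from continuity of $L$ together with the fact that $L(u,v_0)^2=1$ at every $u$.
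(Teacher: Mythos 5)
Your proof is correct and follows essentially the same route as the paper: compute $\x_{uu}^2 = L^2$ and $\x_{vv}^2 = N^2$ from the Frenet-type formulas \eqref{Frene_X_Y_n-R^3_1-tl}, relate non-degeneracy of the null coordinate lines to $L,N \neq 0$ and hence to $H^2 - K \neq 0$ via \eqref{H2K-Null_R^3_1-tl}, and relate the natural-parameter condition $\x_{uu}^2 = \x_{vv}^2 = 1$ along the two curves through $\x_0$ to $L(u,v_0) = \pm 1$, $N(u_0,v) = \pm 1$, which is Definition \ref{Def_Can_R31-tl}. Your explicit remark that continuity of $L$ forces the sign to be constant along the $u$-line (so that $L(u,v_0)^2 \equiv 1$ really gives $L(u,v_0) \equiv \varepsilon_1$ for a single $\varepsilon_1 = \pm1$) is a small point the paper leaves implicit, but otherwise the arguments coincide.
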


\begin{proof}
The Frenet formulas  \eqref{Frene_X_Y_n-R^3_1-tl} imply $\x_{uu}^2=\vX_u^2=L^2$ and $\x_{vv}^2=\vY_v^2=N^2$. Hence, the 
$u$-lines and  $v$-lines are non-degenerate if and only if  $L\neq 0$ and $N\neq 0$, which is equivalent to $\M$ being of general type. 
Moreover, $u$ is a natural parameter  ($\x_{uu}^2=1$) of the null curve $\x(u,v_0)$ passing through $\x_0$ if and only if $L^2(u,v_0)=1$.
Analogously,  $v$ is a natural parameter  ($\x_{vv}^2=1$) of the null curve  $\x(u_0,v)$  passing through $\x_0$ if and only if $N^2(u_0,v)=1$. 
The last conditions are equivalent to  $L(u,v_0)=\pm 1$ and $N(u_0,v)=\pm 1$, which means that the coordinates $(u,v)$ of $\M$ are canonical with initial point $\x_0$.
\end{proof}


\section{Natural equation of Lorentz surfaces of general type in $\mathbb R^3_1$}\label{Nat_Eq-LorSurf_R31}

 In this section, we will consider the Gauss and Codazzi equations of a Lorentz surface of general type  $\M=(\D ,\x)$ in $\RR^3_1$
parametrized by canonical isotropic coordinates  $(u,v)$ with initial point  $\x_0=\x(u_0,v_0) \in \M$. 
In such case, the coefficients of the second fundamental form are expressed by the coefficient  $F$ of the first fundamental form, the mean curvature $H$, and the constants  $\varepsilon_1$ and $\varepsilon_2$ (see Definition \ref{Def_Can_R31-tl}).
It follows from \eqref{H_K-Null_R^3_1-tl}, \eqref{LN-FH-R^3_1-tl}, and \eqref{Can_LN_R42-tl} that:
\begin{equation}\label{LMN-FH-R^3_1-tl} \textstyle
L = \varepsilon_1 + \int_{v_0}^v\, F(u,s)H_u(u,s)\,ds;
	\quad M = FH; \quad
N = \varepsilon_2 + \int_{u_0}^u\, F(s,v)H_v(s,v)\,ds,
\end{equation}
where $\varepsilon_1=\pm 1$;\: $\varepsilon_2=\pm 1$,  the signs depending on the kind of the surface.
Substituting these expressions in the Gauss equation \eqref{Gauss_Null_2-R^3_1-tl}, we obtain:
\begin{equation}\label{Nat_Eq_R^3_1-tl} 
\frac{F F_{uv} -  F_u F_v}{F} = \textstyle
\left(\varepsilon_1 + \int_{v_0}^v\, F(u,s)H_u(u,s)\,ds\right) 
\left(\varepsilon_2 + \int_{u_0}^u\, F(s,v)H_v(s,v)\,ds \right) - F^2H^2.
\end{equation}
Consequently, $F$ and $H$ give a  solution to the integro-differential equation \eqref{Nat_Eq_R^3_1-tl}, 
which we  call  the \emph{\bfseries natural equation} of the Lorentz surfaces of general type in $\RR^3_1$.
The converse is also true. Namely, the following Bonnet-type theorem holds:

\begin{thm}\label{Thm-Bonnet_Nat_Eq_FH_R31-tl}
Let $\M=(\D ,\x)$ be a Lorentz surface of general type in $\RR^3_1$ and $(u,v)$ be canonical isotropic coordinates with initial point
 $\x_0=\x(u_0,v_0) \in \M$. Then, the coefficient $F$ of the first fundamental form and the mean curvature  $H$ of $\M$ give a  solution to the natural equation  \eqref{Nat_Eq_R^3_1-tl}.
If  $\hat\M$ is obtained from  $\M$ by a proper motion in  $\RR^3_1$, then $\hat\M$ generates the same solution to 
 \eqref{Nat_Eq_R^3_1-tl}. 

 Conversely, let  $F>0$ and $H$ be functions of $(u,v)$ defined in a neighbourhood of $(u_0,v_0)$ and satisfying the natural equation 
\eqref{Nat_Eq_R^3_1-tl}, where $\varepsilon_1=\pm 1$ and $\varepsilon_2=\pm 1$. Then, at least locally, there exists a unique (up to a proper motion in $\RR^3_1$) Lorentz surface of general type in $\RR^3_1$ defined by $x = x(u,v)$ in canonical isotropic coordinates with initial point $x_0 = x(u_0, v_0)$, such that the given functions $F$ and $H$ are the non-zero coefficient of the first fundamental form and the mean curvature, respectively, and the signs of the corresponding coefficients $L$ and $N$ of the second fundamental form coincide with the signs of $\varepsilon_1$ and $\varepsilon_2$.
\end{thm}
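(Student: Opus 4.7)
The plan is to split the argument into the direct and converse directions. For the \emph{direct} part, I would observe that everything needed is essentially assembled in the text preceding the theorem: given the canonicity conditions $L(u,v_0)=\varepsilon_1$ and $N(u_0,v)=\varepsilon_2$ together with the Codazzi equations \eqref{Codazzi_12_Null_2-R^3_1-tl} (which, since $M=FH$, read $L_v=FH_u$ and $N_u=FH_v$), integration gives the representations \eqref{LMN-FH-R^3_1-tl}, and substituting them into the Gauss equation \eqref{Gauss_Null_2-R^3_1-tl} yields precisely \eqref{Nat_Eq_R^3_1-tl}. The motion-invariance clause is immediate, since $F$ (a coefficient of the first fundamental form) and $H$ (the mean curvature) are preserved by every proper isometry of $\RR^3_1$.

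For the \emph{converse}, the plan is to reverse this derivation and invoke the Bonnet-type Theorem \ref{Thm-Bonnet_FLMN_R31-tl}. Given $F>0$ and $H$ satisfying \eqref{Nat_Eq_R^3_1-tl} near $(u_0,v_0)$, I would define candidate second-fundamental-form coefficients by
\begin{equation*}
\textstyle L := \varepsilon_1 + \int_{v_0}^v F(u,s)H_u(u,s)\,ds,\qquad M := FH,\qquad N := \varepsilon_2 + \int_{u_0}^u F(s,v)H_v(s,v)\,ds.
\end{equation*}
Differentiation under the integral sign gives $L_v=FH_u$ and $N_u=FH_v$; because $M/F=H$, these are exactly the Codazzi equations \eqref{Codazzi_12_Null_2-R^3_1-tl}. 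Substituting the above definitions of $L$, $M$, $N$ into the Gauss equation \eqref{Gauss_Null_2-R^3_1-tl} turns it, term by term, into the hypothesis \eqref{Nat_Eq_R^3_1-tl}. Theorem \ref{Thm-Bonnet_FLMN_R31-tl} then produces, locally and uniquely up to a proper motion of $\RR^3_1$, a Lorentz surface $\M=(\D,\x)$ parametrized by isotropic coordinates whose first- and second-fundamental-form coefficients are exactly the prescribed $F$, $L$, $M$, $N$.

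It remains to verify that $\M$ is of general type, that $(u,v)$ are canonical with initial point $\x_0=\x(u_0,v_0)$, and that the sign conditions hold. Evaluating $L$ at $v=v_0$ and $N$ at $u=u_0$ collapses the integrals, leaving $L(u,v_0)=\varepsilon_1$ and $N(u_0,v)=\varepsilon_2$, which gives canonicity and matches the prescribed signs. Continuity together with $\varepsilon_1,\varepsilon_2\neq 0$ forces $L\neq 0$ and $N\neq 0$ in a (possibly smaller) neighbourhood of $(u_0,v_0)$, so by \eqref{H2K-Null_R^3_1-tl} we have $H^2-K = LN/F^2 \neq 0$ and $\M$ is of general type.

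The step I expect to require the most care — the only real ``obstacle'' — is the bookkeeping check that the canonicity normalisation \eqref{Can_LN_R42-tl} is compatible with, rather than restrictive of, the uniqueness clause in Theorem \ref{Thm-Bonnet_FLMN_R31-tl}. Since $L$ and $N$ are \emph{defined} by the canonical integrals, no additional rescaling of $(u,v)$ enters the construction, so the uniqueness delivered by Theorem \ref{Thm-Bonnet_FLMN_R31-tl} transfers directly to uniqueness in the canonical frame, and the residual freedom in $(u,v)$ is precisely that classified in Theorem \ref{Can_Coord-uniq_R31-tl}.
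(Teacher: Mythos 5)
Your proposal follows essentially the same route as the paper: the direct part substitutes the canonical integral formulas \eqref{LMN-FH-R^3_1-tl} into the Gauss equation, and the converse defines $L$, $M$, $N$ by those same formulas, checks Gauss and Codazzi by differentiation under the integral, applies Theorem \ref{Thm-Bonnet_FLMN_R31-tl}, and then reads off canonicity, the sign condition, and $H^2-K\neq 0$ by evaluating the integrals at $v=v_0$ and $u=u_0$. That all matches the paper and is correct.

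The one place you over-complicate things is the uniqueness discussion. You flag as the ``real obstacle'' the compatibility of the canonicity normalisation with the uniqueness clause of Theorem \ref{Thm-Bonnet_FLMN_R31-tl}, and you invoke Theorem \ref{Can_Coord-uniq_R31-tl} about the residual coordinate freedom. Neither is needed. The paper's uniqueness argument is a one-liner you almost state but do not quite nail: if $\hat\M$ is \emph{any} other Lorentz surface of general type parametrized by canonical isotropic coordinates with initial point $(u_0,v_0)$, with the given $F$, $H$, and the given signs $\varepsilon_1,\varepsilon_2$, then canonicity plus Codazzi force $\hat L,\hat M,\hat N$ to be given by \emph{exactly} the same formulas \eqref{LMN-FH-R^3_1-tl}. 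Hence $\hat\M$ and $\M$ share the full quadruple $(F,L,M,N)$, and Theorem \ref{Thm-Bonnet_FLMN_R31-tl} already gives the ``unique up to a proper motion'' conclusion. There is no coordinate-rescaling bookkeeping to do, because the theorem fixes the parametrization (including the initial point) as part of the data; Theorem \ref{Can_Coord-uniq_R31-tl} is about a different question. This is a presentational slip rather than a gap, but tightening it would make the converse genuinely complete.
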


\begin{proof}
We have already seen that the coefficient $F$  and the mean  curvature $H$ of a surface with the given properties satisfy the natural equation.
Now we will prove the converse.

Given the functions $F$ and $H$ satisfying \eqref{Nat_Eq_R^3_1-tl}, we define functions 
 $L$, $M$, and $N$ by equalities \eqref{LMN-FH-R^3_1-tl}. Then,  \eqref{Nat_Eq_R^3_1-tl} implies that the quadruple  $F$, $L$, $M$, $N$ satisfies the Gauss equation \eqref{Gauss_Null_2-R^3_1-tl}.
Differentiating  \eqref{LMN-FH-R^3_1-tl} we get that the Codazzi equations \eqref{Codazzi_12_Null_2-R^3_1-tl} are also fulfilled.
Applying Theorem \ref{Thm-Bonnet_FLMN_R31-tl} we get a Lorentz surface $\M$ parametrized by isotopic coordinates whose coefficient of the first fundamental form is the given function 
 $F$ and the coefficients of the second fundamental form are the functions  $L$, $M$, $N$, defined by \eqref{LMN-FH-R^3_1-tl}.
Comparing \eqref{LN-FH-R^3_1-tl} with \eqref{LMN-FH-R^3_1-tl} we obtain
 $L(u,v_0) = \varepsilon_1$,  $N(u_0,v) = \varepsilon_2$, which means that $\M$ is of general type and  $(u,v)$ are canonical isotropic coordinates.
Comparing  \eqref{H_K-Null_R^3_1-tl} with \eqref{LMN-FH-R^3_1-tl} we see that the mean curvature of $\M$ is the given function $H$. 
Hence, the surface  $\M$ has the necessary properties.

 Moreover, if  $\hat\M$ is another surface with the same properties, then \eqref{LMN-FH-R^3_1-tl} is  also valid for $\hat\M$.
So,  $\M$ and $\hat\M$ have one and the same coefficients of the first and second fundamental form. Hence,  according to Theorem \ref{Thm-Bonnet_FLMN_R31-tl}, $\hat\M$ is obtained from  $\M$ by a proper motion in  $\RR^3_1$.
\end{proof}

Now, we will consider the natural equation \eqref{Nat_Eq_R^3_1-tl} in the case of a surface with constant mean curvature $H$. In this case,  equation \eqref{Nat_Eq_R^3_1-tl}  takes the form:
\begin{equation}\label{Nat_Eq_H_cnst_R^3_1-tl}
\frac{F F_{uv} -  F_u F_v}{F} = \varepsilon_1 \varepsilon_2 - F^2H^2,
\end{equation}
and \eqref{LMN-FH-R^3_1-tl} implies $L=\varepsilon_1$, $N=\varepsilon_2$.
Then, it follows from \eqref{H2K-Null_R^3_1-tl} that
\begin{equation}\label{H2K_H_cnst-Null_R^3_1-tl}
H^2-K = \frac{\varepsilon_1 \varepsilon_2}{F^2};  \qquad  F = \frac{1}{\sqrt{|H^2-K|}}.
\end{equation}

If we rewrite \eqref{Nat_Eq_H_cnst_R^3_1-tl} in the form
\begin{equation}\label{Nat_Eq_H_cnst_ln_R^3_1-tl}
\frac{1}{F}(\ln F)_{uv} = \frac{\varepsilon_1 \varepsilon_2}{F^2} - H^2
\end{equation}
then by use of  \eqref{H2K_H_cnst-Null_R^3_1-tl}  we obtain
\begin{equation}\label{Nat_Eq_H_cnst_HK_R^3_1-tl}
\sqrt{|H^2-K|}\big(\ln \sqrt{|H^2-K|}\,\big)_{uv} = K; \qquad H^2-K \neq 0.
\end{equation}
We call \eqref{Nat_Eq_H_cnst_HK_R^3_1-tl} the \textit{natural equation} of constant mean curvature Lorenz surfaces in  $\RR^3_1$. 

So, we can formulate the following Bonnet-type theorem for Lorentz surfaces of constant non-zero mean curvature.

\begin{thm}\label{Thm-Bonnet_Nat_Eq_H_cnst_HK_R31-tl}
Let $\M=(\D ,\x)$ be a Lorentz surface of general type in $\RR^3_1$ with constant non-zero mean curvature $H$ parametrized by canonical isotropic coordinates.  Then, the Gauss curvature $K$ satisfies the natural equation \eqref{Nat_Eq_H_cnst_HK_R^3_1-tl}. If $\hat\M$ is obtained from  $\M$ by a proper motion in  $\RR^3_1$, then $\hat\M$  generates the same solution to 
\eqref{Nat_Eq_H_cnst_HK_R^3_1-tl}. 

Conversely, let $H$ be a non-zero constant and  $K$ be a function of  $(u,v)$ satisfying the natural equation \eqref{Nat_Eq_H_cnst_HK_R^3_1-tl}. 
Then, at least locally, there exist (up to a proper motion in  $\RR^3_1$) exactly two Lorentz surfaces of general type in $\RR^3_1$ 
parametrized by canonical isotropic coordinates, with the constant $H$ as  non-zero mean curvature and the function  $K$ as  Gauss curvature.
\end{thm}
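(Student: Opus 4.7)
The plan is to obtain both directions by reducing to the general Bonnet-type result, Theorem \ref{Thm-Bonnet_Nat_Eq_FH_R31-tl}, and exploiting the algebraic manipulations already performed in this section.

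For the direct implication, the derivation is essentially carried out in the text preceding the statement. With $H$ constant we have $H_u = H_v = 0$, so the integral terms in \eqref{Nat_Eq_R^3_1-tl} vanish, yielding \eqref{Nat_Eq_H_cnst_R^3_1-tl}. From \eqref{H2K-Null_R^3_1-tl} together with $L = \varepsilon_1$, $N = \varepsilon_2$ one reads off $H^2 - K = \varepsilon_1\varepsilon_2/F^2$, hence $F = 1/\sqrt{|H^2-K|}$ and $\ln F = -\ln\sqrt{|H^2-K|}$; substitution then converts \eqref{Nat_Eq_H_cnst_R^3_1-tl} into \eqref{Nat_Eq_H_cnst_HK_R^3_1-tl}. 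The invariance under proper motion is inherited immediately from Theorem \ref{Thm-Bonnet_Nat_Eq_FH_R31-tl}, since $F$ and $H$ are determined as geometric invariants.

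For the converse, given a nonzero constant $H$ and a function $K$ solving \eqref{Nat_Eq_H_cnst_HK_R^3_1-tl}, I would set $F := 1/\sqrt{|H^2 - K|}$, which is positive and well-defined because $H^2 - K \neq 0$. A direct computation reversing the chain above shows that $(F,H)$ satisfies the special form \eqref{Nat_Eq_H_cnst_R^3_1-tl} of \eqref{Nat_Eq_R^3_1-tl} for any sign pair $(\varepsilon_1,\varepsilon_2)$ with $\varepsilon_1\varepsilon_2 = \mathrm{sign}(H^2-K)$. Applying Theorem \ref{Thm-Bonnet_Nat_Eq_FH_R31-tl} to this data produces, for each admissible sign pair, a Lorentz surface of general type in canonical isotropic coordinates with mean curvature $H$, unique up to proper motion in $\RR^3_1$. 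That the Gauss curvature agrees with the prescribed $K$ is then a one-line check using \eqref{H_K-Null_R^3_1-tl}: $K_{\M} = (M^2 - LN)/F^2 = H^2 - \varepsilon_1\varepsilon_2/F^2 = H^2 - \mathrm{sign}(H^2-K)\,|H^2-K| = K$.

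The remaining point is the counting. The constraint $\varepsilon_1\varepsilon_2 = \mathrm{sign}(H^2 - K)$ leaves precisely two choices: $(\varepsilon_1,\varepsilon_2) = (1,1)$ or $(-1,-1)$ in the first-kind case, and $(1,-1)$ or $(-1,1)$ in the second-kind case. The two resulting surfaces have coefficients $L$ and $N$ of opposite signs in their canonical coordinates, and since a proper motion preserves all coefficients of the fundamental forms in the corresponding parametrization (Theorem \ref{Thm-Bonnet_FLMN_R31-tl}), they cannot be congruent. Hence exactly two non-congruent surfaces. I expect the main difficulty to be purely bookkeeping: ensuring that $\mathrm{sign}(H^2-K)$ is well-defined on the (connected) domain, and verifying that both admissible sign choices are genuinely realized and distinct; the analytic substance collapses to the equivalence between \eqref{Nat_Eq_H_cnst_R^3_1-tl} and \eqref{Nat_Eq_H_cnst_HK_R^3_1-tl} already recorded above.
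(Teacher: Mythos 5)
Your proof is correct and follows essentially the same route as the paper: define $F$ from $K$ and $H$ via \eqref{H2K_H_cnst-Null_R^3_1-tl}, use the constraint $\varepsilon_1\varepsilon_2 = \operatorname{sign}(H^2-K)$ to identify the two admissible sign pairs, invoke Theorem \ref{Thm-Bonnet_Nat_Eq_FH_R31-tl} for each pair, verify via \eqref{H_K-Null_R^3_1-tl} that the resulting Gauss curvature is $K$, and conclude non-congruence from sign preservation under proper motions. No meaningful deviation from the paper's argument.
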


\begin{proof}
We have already seen that the mean  curvature $H$ and the Gauss curvature $K$ of a surface with the given properties satisfy equation \eqref{Nat_Eq_H_cnst_HK_R^3_1-tl}.
Now we will prove the converse.

Given the constant $H$ and the function  $K$ satisfying \eqref{Nat_Eq_H_cnst_HK_R^3_1-tl}, we define a function  
 $F$ and constants $\varepsilon_1=\pm 1$, $\varepsilon_2=\pm 1$ such that 
equalities  \eqref{H2K_H_cnst-Null_R^3_1-tl} hold true.
Then equality  \eqref{Nat_Eq_H_cnst_HK_R^3_1-tl} implies \eqref{Nat_Eq_H_cnst_ln_R^3_1-tl}, which is equivalent to 
 \eqref{Nat_Eq_H_cnst_R^3_1-tl}, the latter being the natural equation 
 \eqref{Nat_Eq_R^3_1-tl} in the case $H$ is constant. Note that the function  $F$ is determined uniquely by  \eqref{H2K_H_cnst-Null_R^3_1-tl}, while  for the choice of $\varepsilon_1$ and $\varepsilon_2$ we have two different options depending on the choice of signs. 
This means that according to Theorem  \ref{Thm-Bonnet_Nat_Eq_FH_R31-tl} we obtain two different Lorentz surfaces $\M_1$ and $\M_2$ parametrized by canonical isotropic coordinates, whose mean curvature is the given constant $H$ and the coefficient of the first fundamental form is the given function $F$.
 Equalities  \eqref{H2K_H_cnst-Null_R^3_1-tl} hold true for both  $\M_1$ and $\M_2$, and hence,  $K$ is determined uniquely by  $F$ and $H$.
Consequently, the Gauss curvature of both $\M_1$ and $\M_2$ is the given function $K$.

The surface  $\M_2$ cannot be obtained from  $\M_1$ by a proper motion in $\RR^3_1$, since the proper motions preserve the signs of 
$\varepsilon_1$ and $\varepsilon_2$, and the signs of these constants are different for $\M_1$ and $\M_2$. 
If $\hat\M$ is another surface with the same properties, then equalities  \eqref{H2K_H_cnst-Null_R^3_1-tl} hold true also for $\hat\M$ .
Hence,  $\M_1$, $\M_2$, and $\hat\M$ have one and the same coefficients of the first fundamental form and equal mean curvatures. 
Moreover, the constants  $\varepsilon_1$ and $\varepsilon_2$ for $\hat\M$ coincide with the constants for one of the two surfaces $\M_1$ or $\M_2$.
So, according to Theorem  \ref{Thm-Bonnet_Nat_Eq_FH_R31-tl}, $\hat\M$ can be obtained from one of the two surfaces $\M_1$ or $\M_2$ by a proper motion in $\RR^3_1$.
\end{proof}

\begin{remark}\label{Rem-Bonnet_Nat_Eq_H_cnst_HK_R31-tl}
The two surfaces obtained in the last theorem are really different.
We have already seen that $\M_2$ cannot be obtained from  $\M_1$ by a proper motion in $\RR^3_1$. Furthermore, 
$\M_2$ cannot be obtained from $\M_1$ by coordinate  change of the form $u = u(\tilde u)$ and  $v = v(\tilde v)$, since such a change preserves the signs of 
 $\varepsilon_1$ and $\varepsilon_2$, according to  \eqref{FLMN_tild_1-R^3_1-tl}.
$\M_2$ cannot be obtained from $\M_1$ also by coordinate  change of the form  $u = u(\tilde v)$ and $v = v(\tilde u)$,
since in such case the sign of  $H$ changes, according to \eqref{H_K-Null_R^3_1-tl} and \eqref{FLMN_tild_2-R^3_1-tl},
but the surfaces  $\M_1$ and $\M_2$ have equal mean curvatures.
Such a pair of surfaces is  presented in Examples \ref{Exmp-Cylnd1_R^3_1-tl} and \ref{Exmp-Cylnd2_R^3_1-tl}.
\end{remark}

 Now we will consider the case of a surface  $\M$ with zero mean curvature $H$, i.e. $\M$  is a minimal surface in $\RR^3_1$.
In this case, equality  \eqref{Nat_Eq_H_cnst_HK_R^3_1-tl} takes the form:
\begin{equation}\label{Nat_Eq_Min_K_R^3_1-tl}
\sqrt{|K|}\big(\ln \sqrt{|K|}\,\big)_{uv} = K; \qquad K \neq 0.
\end{equation}
Let us point out that the Gauss curvature $K$ and the canonical coordinates are invariant under non-proper motions in $\RR^3_1$.
Hence, in this case the surface is determined uniquely by the solution of \eqref{Nat_Eq_Min_K_R^3_1-tl}  up to an arbitrary  motion.
We have the following Bonnet-type theorem for minimal Lorentz surfaces in $\RR^3_1$.

\begin{thm}\label{Thm-Bonnet_Nat_Eq_Min_K_R31-tl}
Let  $\M=(\D ,\x)$ be a minimal Lorentz surface of general type in  $\RR^3_1$ parametrized by canonical isotropic coordinates. 
Then, the Gauss curvature $K$ of $\M$ satisfies the natural equation \eqref{Nat_Eq_Min_K_R^3_1-tl}.
If $\hat\M$ is obtained from   $\M$ by a motion in $\RR^3_1$, then  $\hat\M$  generates the same solution to \eqref{Nat_Eq_H_cnst_HK_R^3_1-tl}. 

Conversely, given a function $K$ of $(u,v)$ satisfying the natural equation  \eqref{Nat_Eq_Min_K_R^3_1-tl}, there exists, at least locally,  a unique (up to a motion in $\RR^3_1$) minimal Lorentz surface of general type parametrized by canonical isotropic coordinates, such that its  Gauss curvature is the given function $K$.  
\end{thm}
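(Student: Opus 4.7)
The forward direction I would extract immediately from the constant mean curvature case. Since a minimal surface has the constant $H \equiv 0$, Theorem \ref{Thm-Bonnet_Nat_Eq_H_cnst_HK_R31-tl} gives that $K$ satisfies \eqref{Nat_Eq_H_cnst_HK_R^3_1-tl}, which for $H=0$ is exactly \eqref{Nat_Eq_Min_K_R^3_1-tl}, and the invariance of the solution under proper motions is also contained in that theorem. For a non-proper motion one only needs to remark that $K$ is an intrinsic invariant and that the canonicity conditions $L(u,v_0)=\varepsilon_1$, $N(u_0,v)=\varepsilon_2$ are preserved modulo the signs $\varepsilon_1,\varepsilon_2$, so the same function $K(u,v)$ is generated.

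For the converse, I would apply Theorem \ref{Thm-Bonnet_Nat_Eq_H_cnst_HK_R31-tl} with the constant $H=0$ and the given $K$. That theorem supplies two minimal Lorentz surfaces $\M_1,\M_2$ of general type, in canonical isotropic coordinates, corresponding to the two sign choices $(\varepsilon_1,\varepsilon_2)$ compatible with $-K=\varepsilon_1\varepsilon_2/F^2$, and asserts that they are not related by any proper motion of $\RR^3_1$. To upgrade the conclusion to uniqueness under arbitrary motions, it suffices to exhibit a single non-proper motion that takes $\M_1$ to $\M_2$ up to a proper motion.

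The natural candidate is the central symmetry $\sigma:\RR^3_1\to\RR^3_1$, $\sigma(\x)=-\x$, which is an isometry of $\RR^3_1$ of determinant $-1$ and hence non-proper. Applied to the parametrization it sends $\vX,\vY$ to $-\vX,-\vY$ and leaves the cross product $\x_u\times \x_v$, and therefore $\vl$, unchanged; consequently $F$ is preserved while $L,M,N$ all change sign. In particular $H=M/F$ flips sign, so $H=0$ is preserved, and the boundary values $L(u,v_0),N(u_0,v)$ each flip. Hence $\sigma(\M_1)$ is a minimal Lorentz surface, parametrized by the same canonical coordinates, with the same $K$, but with signs $(-\varepsilon_1,-\varepsilon_2)$ in \eqref{Can_LN_R42-tl}. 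By the uniqueness part of Theorem \ref{Thm-Bonnet_Nat_Eq_H_cnst_HK_R31-tl}, $\sigma(\M_1)$ coincides with $\M_2$ up to a proper motion, and composing shows that $\M_1$ and $\M_2$ are related by a single motion in $\RR^3_1$.

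The main obstacle is conceptual rather than computational: one must identify why the ``doubling'' produced by Theorem \ref{Thm-Bonnet_Nat_Eq_H_cnst_HK_R31-tl} collapses to a single equivalence class precisely when $H=0$. The reason is that $H=0$ is the unique constant mean curvature fixed under an orientation-reversing isometry, so a single non-proper motion can simultaneously flip both $\varepsilon_1$ and $\varepsilon_2$ without disturbing $H$. Once the effect of $\sigma$ on $F,L,M,N$ is tabulated, the rest follows mechanically from the previously established Bonnet-type theorem.
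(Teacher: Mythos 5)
Your proposal is correct and follows essentially the paper's own route: run the constant-mean-curvature argument with $H=0$ to obtain two candidate surfaces $\M_1,\M_2$ distinguished by the signs $(\varepsilon_1,\varepsilon_2)$, then observe that a non-proper motion of $\RR^3_1$ preserves $F$, $K$, and $H=0$ while reversing the normal and hence $L,M,N$ and both boundary signs, so by the Bonnet theorem for $(F,H)$ it carries $\M_1$ onto $\M_2$ up to a proper motion. Where the paper argues with an arbitrary non-proper motion, you pin down the central symmetry $\sigma(\x)=-\x$ and tabulate its effect explicitly, which makes the sign-flip step self-contained but does not alter the structure of the argument.
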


\begin{proof}
The proof is similar to the proof of Theorem \ref{Thm-Bonnet_Nat_Eq_H_cnst_HK_R31-tl}, therefore we will not give it in details, we will only point out the difference.
Again, given the  function $K$, we obtain two different minimal surfaces  $\M_1$ and $\M_2$ parametrized by canonical isotropic coordinates, having $K$ as the Gauss curvature and having different signs of 
$\varepsilon_1$ and $\varepsilon_2$.
Let $\tilde\M_1$ be a surface obtained from  $\M_1$ by a non-proper motion in $\RR^3_1$.
Then, the Gauss curvature $K$ and  the signs of $\varepsilon_1$ and $\varepsilon_2$ of  $\tilde\M_1$ are the same as those of  $\M_2$.
So, equalities  \eqref{H2K_H_cnst-Null_R^3_1-tl} imply that $\tilde\M_1$ and $\M_2$ have one and the same  coefficient  $F$ of the first fundamental form. 
Hence, according to Theorem  \ref{Thm-Bonnet_Nat_Eq_FH_R31-tl},  $\M_2$ is obtained from $\tilde\M_1$ by a proper motion in  $\RR^3_1$.
Consequently,  $\M_2$ is obtained from  $\M_1$ by a non-proper motion in $\RR^3_1$.
\end{proof}


\section{Examples}\label{S:Examples_Can_Coord-LorSurf_R31}

In this section, we will consider examples of Lorentz surfaces of general type in $\RR^3_1$, illustrating the developed theory. 
First, we give an example of a minimal Lorentz surface.

\begin{example}\label{Exmp-Ennp1_R^3_1-tl}
Let us consider the surface in $\RR^3_1$, determined by the following parametrization:
 \begin{equation}\label{Ennp1_R^3_1-tl}
\x = \frac{1}{6}(u^3-v^3+3u-3v,\,-u^3+v^3+3u-3v,\,3u^2-3v^2).
\end{equation}
The coefficients of the first and the second fundamental form are:
\begin{equation*}
E=G=0; \quad F=\frac{1}{2}(u-v)^2; \quad L=1; \quad M=0; \quad N=1.
\end{equation*}
The Gauss curvature and the mean curvature of the surface defined above are expressed as follows:
\begin{equation}\label{Ennp1_HK_R^3_1-tl}
K=-\frac{4}{(u-v)^4}; \quad H=0.
\end{equation}

 Hence, the surface defined by \eqref{Ennp1_R^3_1-tl} is a minimal Lorentz surface (of Enneper-type) parametrized by isotropic coordinates. Moreover, the coordinates $(u, v)$ are canonical, since 
$L=N=1$. The Gauss curvature $K$ is negative, so the surface is of first kind according to Definition  \ref{Def-Kind12_R^3_1-tl}. The function $K$ given in \eqref{Ennp1_HK_R^3_1-tl} is  a solution to the  
natural equation \eqref{Nat_Eq_Min_K_R^3_1-tl}.
\end{example}

\begin{example}\label{Exmp-Ennp2_R^3_1-tl}
Let us consider the surface in $\RR^3_1$, defined by
\begin{equation}\label{Ennp2_R^3_1-tl}
\x = \frac{1}{6}(u^3-v^3+3u-3v,\,-u^3+v^3+3u-3v,\,3u^2+3v^2).
\end{equation}
The coefficients of the first and the second fundamental form are:
\begin{equation*}
E=G=0; \quad F=\frac{1}{2}(u+v)^2; \quad L=1; \quad M=0; \quad N=-1.
\end{equation*}
The Gauss curvature and the mean curvature are expressed as follows:
\begin{equation}\label{Ennp2_HK_R^3_1-tl}
K=\frac{4}{(u+v)^4}; \quad H=0.
\end{equation}

 As in the previous example, \eqref{Ennp2_R^3_1-tl} defines a minimal Lorentz surface of Enneper-type parametrized by canonical isotropic coordinates. 
In this example,  the Gauss curvature $K$ is positive  and hence, the surface is of second kind according to  Definition \ref{Def-Kind12_R^3_1-tl}.
The function $K$ given in \eqref{Ennp2_HK_R^3_1-tl} is  also a solution to the  
natural equation \eqref{Nat_Eq_Min_K_R^3_1-tl}.
\end{example}

Now, we will give examples of  surfaces with  non-zero constant mean curvature.

\begin{example}\label{Exmp-Sphere_R^3_1-tl}
We consider the Lorentz sphere  in $\RR^3_1$ parametrized by isothermal coordinates  $(t,s)$ as follows:
\begin{equation*}
\x = (\sinh t \, \sech s,\, \cosh t \, \sech s ,\,\tanh s).
\end{equation*}
Changing the coordinates with isotropic ones, we obtain:
\begin{equation}\label{Sphere_Null_R^3_1-tl}
\x = (\sinh(u-v) \sech(u+v) ,\,\cosh(u-v) \sech(u+v),\,\tanh(u+v)).
\end{equation}
The coefficients of the first and the second fundamental form are:
\begin{equation*}
E=G=0; \quad F=2\sech^2(u+v); \quad L=0; \quad M=2\sech^2(u+v); \quad N=0.
\end{equation*}
The Gauss curvature and the mean curvature are given by:
\begin{equation*}
K=1; \quad H=1.
\end{equation*}

Hence, the surface defined by \eqref{Sphere_Null_R^3_1-tl} is a Lorentz surface with  non-zero constant mean curvature parametrized by isotropic coordinates. 
In this example,  $H^2-K=0$ which means that the surface is not of general type within the meaning of Definition \ref{Def-Gen_Typ_R^3_1-tl}. 
In this case, we cannot introduce canonical coordinates in the sense of Definition  \ref{Def_Can_R31-tl}.
\end{example}

\begin{example}\label{Exmp-Cylnd1_R^3_1-tl}
Let us consider the cylinder in $\RR^3_1$, parametrized by isothermal coordinates $(t,s)$ as follows:
\begin{equation*}
\x = (t,\,\cos s,\,\sin s).
\end{equation*}
Changing the coordinates with isotropic ones, we obtain:
\begin{equation*}
\x = (u-v,\,\cos (u+v),\,\sin (u+v)).
\end{equation*}
The coefficients of the first and the second fundamental form are:
\begin{equation*}
E=G=0; \quad F=2; \quad L=1; \quad M=1; \quad N=1.
\end{equation*}
The Gauss curvature and the mean curvature are given by:
\begin{equation*}
K=0; \quad H=\frac{1}{2}.
\end{equation*}

 This is an example of a Lorentz surface with non-zero constant mean curvature parametrized by canonical isotropic coordinates. 
It corresponds to the trivial (zero) solution to equation  \eqref{Nat_Eq_H_cnst_HK_R^3_1-tl}.
\end{example}

\begin{example}\label{Exmp-Cylnd2_R^3_1-tl}
Now, let us consider the hyperbolic Lorentz cylinder in $\RR^3_1$, parametrized by isothermal coordinates $(t,s)$ as follows:
\begin{equation*}
\x = (\sinh s,\,\cosh s,\, t).
\end{equation*}
Changing the coordinates with isotropic ones, we obtain:
\begin{equation*}
\x = (\sinh (u-v),\,\cosh (u-v),\, u+v).
\end{equation*}
The coefficients of the first and the second fundamental form are:
\begin{equation*}
E=G=0; \quad F=2; \quad L=-1; \quad M=1; \quad N=-1.
\end{equation*}
The Gauss curvature and the mean curvature are given by:
\begin{equation*}
K=0; \quad H=\frac{1}{2}.
\end{equation*}

 This is also an example of a Lorentz surface with non-zero constant mean curvature parametrized by canonical isotropic coordinates. 
It also corresponds to the trivial (zero) solution to equation  \eqref{Nat_Eq_H_cnst_HK_R^3_1-tl}.
\end{example}

 Comparing the results of the last two examples, we see
that the two cylinders have equal constant mean curvatures and equal Gauss  curvatures. Hence, they give one and the same solution to the natural equation 
 \eqref{Nat_Eq_H_cnst_HK_R^3_1-tl}.
But there is a difference in the signs of  $\varepsilon_1=L$ and $\varepsilon_2=N$. These two cylinders form a pair of surfaces 
 $\M_1$ and $\M_2$ as the ones described in the proof of Theorem  \ref{Thm-Bonnet_Nat_Eq_H_cnst_HK_R31-tl}.

\medskip

Finally, we will consider a surface with non-constant  mean curvature.

\begin{example}\label{Exmp-Cone1_R^3_1-tl}
Let us consider the hyperbolic Lorentz cone  $\M$ in $\RR^3_1$, parametrized by isothermal coordinates $(t,s)$ as follows:
\begin{equation*}
\x = \big(\ee^\frac{t}{2} \sinh s,\, \sqrt{3}\,\ee^\frac{t}{2},\, \ee^\frac{t}{2} \cosh s\big).
\end{equation*}
Changing the coordinates with isotropic ones, we obtain:
\begin{equation*}
\x = \big(\ee^\frac{u+v}{2} \sinh (u-v),\, \sqrt{3}\,\ee^\frac{u+v}{2},\, \ee^\frac{u+v}{2} \cosh (u-v)\big).
\end{equation*}
The coefficients of the first and the second fundamental form are:
\begin{equation*}
E=G=0; \quad F=2\ee^{u+v}; \quad 
L=\frac{\sqrt{3}}{2}\ee^\frac{u+v}{2}; \quad M=-\frac{\sqrt{3}}{2}\ee^\frac{u+v}{2}; \quad N=\frac{\sqrt{3}}{2}\ee^\frac{u+v}{2}.
\end{equation*}
The Gauss curvature and the mean curvature are given by:
\begin{equation*}
K=0; \quad H=-\frac{\sqrt{3}}{4}\ee^{-\frac{u+v}{2}}.
\end{equation*}

 Hence, in this example,  $\M$ is a Lorentz surface with non-constant  mean curvature parametrized by isotropic coordinates. The coordinates $(u,v)$ are not canonical, since 
 $L(u,v_0) \neq \pm1$ and  $N(u_0,v) \neq \pm1$. 

 We will introduce canonical  isotropic coordinates  $(\tilde u,\tilde v)$ with initial point $(u_0,v_0)=(0,0)$.
Using formulas  \eqref{eq_can-sol_R31-tl} we get:
\begin{equation*}
\tilde u = \tilde u_0 + 2\sqrt{2}\sqrt[4]{3}(\ee^\frac{u}{4} - 1); \qquad \tilde v = \tilde v_0 + 2\sqrt{2}\sqrt[4]{3}(\ee^\frac{v}{4} - 1).
\end{equation*}
To simplify the formulas we choose $\tilde u_0 = \tilde v_0 = 2\sqrt{2}\sqrt[4]{3}$. Then,  
\begin{equation*}
\tilde u = 2\sqrt{2}\sqrt[4]{3}\,\ee^\frac{u}{4}; \qquad \tilde v = 2\sqrt{2}\sqrt[4]{3}\,\ee^\frac{v}{4}; \qquad
u = 4\ln\frac{\tilde u}{2\sqrt{2}\sqrt[4]{3}}; \qquad v = 4\ln\frac{\tilde v}{2\sqrt{2}\sqrt[4]{3}}.
\end{equation*}

Using the last equalities and formulas  \eqref{FLMN_tild_1-R^3_1-tl}, we express the coefficient of the first fundamental form   $\tilde F$ and the mean curvature  $\tilde H$  in terms of the canonical coordinates  $(\tilde u,\tilde v)$ as follows:
\begin{equation}\label{Cone1_tldFH_R^3_1-tl}
\tilde F = \frac{\tilde u^3 \tilde v^3}{1152}; \qquad
\tilde H =-\frac{48\sqrt{3}}{\tilde u^2 \tilde v^2}.
\end{equation}
According to Theorem  \ref{Thm-Bonnet_Nat_Eq_FH_R31-tl}, the functions  $\tilde F$ and $\tilde H$ given by \eqref{Cone1_tldFH_R^3_1-tl} give a solution to the natural equation \eqref{Nat_Eq_R^3_1-tl} in the case $\varepsilon_1=\varepsilon_2=1$.
\end{example}

\vskip 5mm \textbf{Acknowledgments:}
The  third author is partially supported by the National Science Fund, Ministry of Education and Science of Bulgaria under contract DN 12/2.

\vskip 3mm


\bibliographystyle{plainnat}

\bibliography{MyBibliography}

\end{document}